\def\Z{\mathbb Z}
\def\R{\mathbb R}
\def\N{\mathbb N}
\def\A{\mathcal A}
\def\B{\mathcal B}
\def\sometext#1 {{\sometextcount0
  \loop\ifnum\sometextcount<#1  \advance\sometextcount1
  Some text
  \repeat}}
\newtheorem{theorem}{Theorem}
\newtheorem{lemma}[theorem]{Lemma}
\newtheorem{remark}[theorem]{Remark}
\newtheorem{proposition}[theorem]{Proposition}
\newtheorem{example}[theorem]{Example}
\begin{document}
\title{\bf OPTIMAL NUMBER REPRESENTATIONS IN NEGATIVE BASE
}
\author{Z. MAS\'AKOV\'A and E. PELANTOV\'A\\[1mm]
{\normalsize Department of Mathematics FNSPE, Czech Technical University in Prague}\\
{\normalsize Trojanova 13, 120 00 Praha 2, Czech Republic}\\
{\normalsize emails: zuzana.masakova@fjfi.cvut.cz, edita.pelantova@fjfi.cvut.cz}}

\maketitle




\begin{abstract}{For a given base $\gamma$ and a digit set $\B$ we consider optimal representations of a number $x$, as defined by Dajani at al.\ in 2012. For a non-integer negative base $\gamma=-\beta<-1$ and the digit set $\A_\beta:=\{0,1,\dots,\lceil\beta\rceil-1\}$ we derive the transformation which generates the optimal representation, if it exists. We show that -- unlike the case of negative integer base -- almost no $x$ has an optimal representation. For a positive base $\gamma=\beta>1$ and the alphabet $\A_\beta$ we provide an alternative proof of statements obtained by Dajani et al.}
\end{abstract}


\section{Introduction}

We consider positional numerations systems given by a base $\gamma\in\R$, $|\gamma|>1$ , and a finite set (called alphabet) $\A\subset\R$,
whose elements are called digits. An expression of a real number $x$ of the form
\begin{equation}\label{eq:rep}
x=\frac{b_1}{\gamma}+\frac{b_2}{\gamma^2}+\frac{b_3}{\gamma^3}+\cdots\qquad\text{where $b_i\in\A$ for $i=1,2,3,\dots$}
\end{equation}
is called $(\gamma,\A)$-representation of $x$. The set of all $x$ having a $(\gamma,\A)$-representation is denoted by $J_{\gamma,\A}$. For a base $\gamma=\beta>1$ one obviously has $J_{\beta,\A}\subseteq[\frac{a_0}{\beta-1},\frac{a_m}{\beta-1}]$,
where $a_0<a_1<\cdots<a_m$ are the digits of $\A$. Pediccini~\cite{Pediccini} derived a necessary and sufficient condition for the alphabet $\A$ so that $J_{\beta,\A}=[\frac{a_0}{\beta-1},\frac{a_m}{\beta-1}]$.

The study of positional numeration systems with a positive non-iteger base $\beta>1$ and the alphabet $\A=\{0,1,\dots,\lfloor\beta\rfloor\}$ started in 1957 R\'enyi~\cite{Renyi}. He gave an algorithm providing to every $x\in[0,1)$ the so-called greedy expansion $x=\sum_{i=1}^\infty\frac{b_i}{\beta^i}$ using the greedy $\beta$-transformation $T_G:[0,1)\to[0,1)$,
\begin{equation}\label{eq:TG}
T(x)=\beta x - \lfloor\beta x\rfloor\,.
\end{equation}
The greedy representation of a number $x$ is then defined as $d(x)=b_1b_2b_3\cdots$, where $b_i=\big\lfloor\beta T_G^{i-1}(x)\big\rfloor$.

Besides the greedy expansion, a real number $x$ may have more  $(\beta,\A)$-representations. In fact, as shown by Sidorov~\cite{Sidorov}, almost every $x$ has continuum of representations.
The greedy expansion of $x$ can be characterized in two ways:
\begin{itemize}
\item[(a)] The sequence $b_1b_2b_3\cdots$ is lexicographically the greatest among all $(\beta,\A)$-representations of $x$.
\item[(b)] We have $0\leq x-\sum_{i=1}^n\frac{b_i}{\beta^i}<\frac{1}{\beta^n}$ for every $n=1,2,3,\dots$.
\end{itemize}

In~\cite{DDKL} the authors introduce the notion of optimal representations. A $(\beta,\A)$-representation $\sum_{i=1}^\infty\frac{c_i}{\beta^i}$ of a real number $x$ is called optimal, if for every other $(\beta,\A)$-representation $\sum_{i=1}^\infty\frac{b_i}{\beta^i}$ of $x$ one has
\begin{equation}\label{eq:1}
\Big|x-\sum_{i=1}^n\frac{c_i}{\beta^i}\Big| \leq \Big|x-\sum_{i=1}^n\frac{b_i}{\beta^i}\Big|\qquad \text{for all $n=1,2,3,\dots$.}
\end{equation}
The main part of~\cite{DDKL} is devoted to the study of existence of optimal representations for a base $\beta>1$ and the alphabet $\A=\{0,1,2,\dots,\lfloor\beta\rfloor\}$. Property (b) of the greedy expansion of $x$ ensures that this is the only candidate for the optimal
representation of $x$. The authors of~\cite{DDKL} show that among all positive bases, exceptional role is played by the so-called confluent numbers. Recall that $\beta>1$ is said confluent, if it is a zero of the polynomial
\begin{equation}\label{eq:confluent}
x^{d+1}-mx^{d}-md^{d-1}-\cdots-mx-p-1\,, \qquad\text{where \ $m,p\in\N$, \ $0\leq p<m$.}
\end{equation}
Such a polynomial is irreducible and its zero $\beta>1$ is a Pisot number, i.e., an algebraic integer with conjugates in modulus smaller than 1. Confluent Pisot numbers and related numeration systems have other exceptional properties, see~\cite{FrougnyConfluent},~\cite{BernatConfluent},~\cite{Complexity},~\cite{Edson}.

The result of Theorem 1.3 in~\cite{DDKL} states that when $\beta$ is cofluent, then every $x\in J_{\beta,\A}$ has an optimal representation.
If $\beta$ is not confluent, then the set of numbers $x\in J_{\beta,\A}$ with an optimal representation is nowhere dense and has Lebesgue measure zero. The authors also study optimal representations for negative integer base. Unlike the case of positive integer base systems, where
any $x$ has optimal representation, for bases $\gamma=-\beta\in\{-2,-3,-4,\dots\}$ and alphabet $\A=\{0,1,\dots,\beta-1\}$, optimal representation exists only for numbers $x$ with unique representation.

In our paper we focus on systems with negative non-integer base $\gamma=-\beta<-1$, $\beta\notin\Z$, and the alphabet $\A=\{0,1,\dots,\lfloor\beta\rfloor\}$. We show that almost every  $x\in J_{-\beta,\A}$ has no optimal representation (see Theorem~\ref{t:hlavni}). For positive non-integer base $\beta>1$, and $\A=\{0,1,\dots,\lfloor\beta\rfloor\}$ we give an alternative simpler proof of Theorem~1.3 from~\cite{DDKL}.

\section{Optimal transformation}

As we have explained, for a positive base $\beta>1$ and the alphabet $\A=\{0,1,\dots,\lceil\beta\rceil-1\}$, the only candidate for the optimal representation of $x\in[0,1)$ is the greedy expansion of $x$ generated by the R\'enyi $\beta$-transformation $T_G$.
Let us derive for a general real base $\gamma$, $|\gamma|>1$, and an alphabet $\B\subset\R$, which transformation must generate the first digit of the optimal representation $x=\frac{c_1}{\gamma^1}+\frac{c_2}{\gamma^2}+\frac{c_3}{\gamma^3}+\cdots$ of an $x\in J_{\gamma,\B}$ (if it exists). Since $\gamma x-c_1 = \frac{c_2}{\gamma}+\frac{c_3}{\gamma^2}+\cdots\in J_{\gamma,\B}$ and inequality~\eqref{eq:1} must be satisfied for $n=1$, we assign the first digit $D(x):=c_1$ so that the following conditions hold,
\begin{enumerate}
\item[(i)] $\gamma x - D(x) \in J_{\gamma,\B}$;
\item[(ii)] $|\gamma x- D(x)| \leq |\gamma x - b|$ for every $b\in\B$ such that $\gamma x -b\in J_{\gamma,\B}$.
\end{enumerate}

Let us mention that for some $x$ conditions (i) and (ii) can be satisfied simultaneously by two different digits. The set of such numbers $x$ -- let us denote it by $E$ -- is finite and for our purposes it is not important which value of $D(x)$ is chosen. For simplicity, we specify the digit assigning function $D:J_{\gamma,\B} \to \B$ so that it is right continuous. With this in hand, we may define the transformation $T_o:J_{\gamma,\B} \to J_{\gamma,\B}$ by the prescription 
$$
T_o(x)=\gamma x-D(x)\,.
$$  
Recall that the mapping $T_o$ is defined to ensure validity
of~\eqref{eq:1} for $n=1$, which motivates us to call it the optimal transformation. However, requiring the validity of~\eqref{eq:1} for all $n\geq 1$ gives the following theorem.

\begin{theorem}\label{t:1}
Let $\gamma\in\R$, $|\gamma|>1$, and let $\B\subset\R$ be finite. Let $\sum_{i=1}^\infty\frac{b_i}{\gamma^i}$ be the optimal $(\gamma,\B)$-representation of a number $x\in J_{\gamma,\B}$ . Then either $x\in \bigcup_{i=0}^\infty T_o^k(E)$ or $b_k=D\big(T^{k-1}(x)\big)$ for every $k\in\{1,2,3,\dots\}$.
\end{theorem}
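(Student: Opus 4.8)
The plan is to prove the contrapositive: assuming the forward $T_o$-orbit of $x$ never meets the ambiguity set $E$, I would show that the optimal digits are \emph{forced} to coincide with those produced by the optimal transformation. Write $c_i:=D\big(T_o^{i-1}(x)\big)$ for the digits generated by $T_o$, and let $y_{k}:=T_o^{k-1}(x)$.

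First I would record the basic telescoping identity. Since $T_o(z)=\gamma z - D(z)$, an easy induction gives $x-\sum_{i=1}^{k}\frac{c_i}{\gamma^i}=\frac{T_o^{k}(x)}{\gamma^{k}}$ for every $k$. Because $\mathcal B$ is finite, $J_{\gamma,\B}$ is bounded, and as $T_o$ maps $J_{\gamma,\B}$ into itself while $|\gamma|>1$, the right-hand side tends to $0$. Hence $\sum_{i}\frac{c_i}{\gamma^i}$ is a genuine $(\gamma,\B)$-representation of $x$, so it is an admissible competitor to which the optimality inequality~\eqref{eq:1} may be applied.

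Next I would run the argument at the first index of disagreement. Suppose the optimal representation $(b_i)$ differs from $(c_i)$, and let $k$ be least with $b_k\neq c_k$, so $b_i=c_i$ for $i<k$. Setting $y:=y_k=T_o^{k-1}(x)$, the telescoping identity yields $x-\sum_{i=1}^{k}\frac{b_i}{\gamma^i}=\frac{\gamma y - b_k}{\gamma^{k}}$ and $x-\sum_{i=1}^{k}\frac{c_i}{\gamma^i}=\frac{\gamma y - c_k}{\gamma^{k}}$, so~\eqref{eq:1} with $n=k$, after multiplying by $|\gamma|^{k}$, reduces to $|\gamma y-b_k|\le|\gamma y-c_k|$. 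Conversely, the tail $\gamma y - b_k=\sum_{j\ge1}\frac{b_{k+j}}{\gamma^{j}}$ lies in $J_{\gamma,\B}$, so $b_k$ is an admissible digit at $y$ in the sense of condition (i); since $c_k=D(y)$ was chosen to satisfy condition (ii), this gives the reverse inequality $|\gamma y-c_k|\le|\gamma y-b_k|$. The two inequalities force $|\gamma y-b_k|=|\gamma y-c_k|$, whence both $c_k$ and $b_k$ satisfy (i) and (ii) at $y$. As $b_k\neq c_k$, this is precisely the defining property of $y\in E$, i.e.\ $T_o^{k-1}(x)\in E$, so $x\in T_o^{-(k-1)}(E)\subseteq\bigcup_{j=0}^{\infty}T_o^{-j}(E)$. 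Taking the contrapositive proves the dichotomy.

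The hard part is not the final squeeze but the two structural facts feeding into it: that $(c_i)$ genuinely represents $x$ (so that~\eqref{eq:1} applies to this competitor) and that the tail $\gamma y - b_k$ of the optimal representation lies in $J_{\gamma,\B}$ (so that $b_k$ qualifies as admissible and condition (ii) can be invoked). Both rest on the boundedness of $J_{\gamma,\B}$ and on its closure under the shift $z\mapsto\gamma z-b$ whenever the result stays representable; I would isolate these as preliminary observations before running the induction on the first index of disagreement. I also note that the conclusion naturally reads with the backward iterates $\bigcup_{j\ge0}T_o^{-j}(E)$, which is what the argument delivers.
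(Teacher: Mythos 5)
Your argument is correct and is essentially the one the paper intends: the paper gives no formal proof of Theorem~\ref{t:1}, only the derivation of conditions (i)--(ii) from~\eqref{eq:1} at $n=1$ preceding the statement, and your telescoping identity plus first-index-of-disagreement writeup supplies exactly the missing details (including the two structural facts you isolate, that the $T_o$-generated string really represents $x$ and that the tail $\gamma y-b_k$ lies in $J_{\gamma,\B}$). Your closing remark is also well taken: the argument delivers $x\in\bigcup_{j\ge0}T_o^{-j}(E)$, so the forward images $T_o^k(E)$ in the printed statement should be read as preimages --- a slip that is harmless for the paper's later use, since both unions are countable and hence of measure zero.
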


\begin{remark}~

\begin{enumerate}
\item Since $E$ is a finite set, the union $\bigcup_{i=0}^\infty T_o^k(E)$ is at most countable.
\item If $\gamma>1$ and $\B\subset[0,+\infty)$, then $J_{\gamma,\B}\subset[0,+\infty)$. Condition (i), i.e., $\gamma x - D(x)\in J_{\gamma,\B}$,
then implies $\gamma x -D(x) \geq 0$, and requirement (ii) thus can be read without absolute values. Therefore $D(x)$ is uniquely determined for every $x\in J_{\gamma,\B}$ and in this case $E=\emptyset$.
\item If moreover $\B=\{0,1,\dots,\lceil\beta\rceil-1\}$, the transformation $T_o$ coincides with the greedy transformation $T_G$ from~\eqref{eq:TG}.
\end{enumerate}
\end{remark}

\section{Optimal transformation for negative bases}

In the whole section we consider a base $\gamma=-\beta$ with $\beta>1$, $\beta\notin \Z$, and the alphabet $\A=\{0,1,\dots,\lfloor\beta\rfloor\}$. It can be readily seen that any $x\in \Big[-\frac{\beta\lfloor\beta\rfloor}{\beta^2-1},\frac{\lfloor\beta\rfloor}{\beta^2-1}\Big]$ has at least one $(-\beta,\A)$-representation. For simplifying the notation, we write $J=J_{-\beta,A}$ and $l=-\frac{\beta\lfloor\beta\rfloor}{\beta^2-1}$,
$r=\frac{\lfloor\beta\rfloor}{\beta^2-1}$, i.e., $J=[l,r]$.
For the description of the optimal transformation $T_o$ in this case, we shall first study condition (i).

\begin{lemma}\label{claim1}
Let $a,b\in\A=\{0,1,\dots,\lfloor\beta\rfloor\}$, $a<b$, and let $x\in J$ be such that $-\beta x -a \in J$ and $-\beta x -b \in J$. Then $b=a+1$ and $-\beta x \in [l+a+1,r+a]$.
\end{lemma}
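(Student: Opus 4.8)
The plan is to turn both membership conditions into two-sided inequalities for the single quantity $-\beta x$ and then intersect them. Since $J=[l,r]$, the hypothesis $-\beta x-a\in J$ reads $l+a\le-\beta x\le r+a$, and likewise $-\beta x-b\in J$ reads $l+b\le-\beta x\le r+b$. Because $a<b$, the larger lower bound is $l+b$ and the smaller upper bound is $r+a$, so intersecting gives
$$ l+b\le-\beta x\le r+a. $$
In particular this interval is non-empty, which forces the arithmetic constraint $b-a\le r-l$.

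The next step is to compute $r-l$ explicitly and extract the consequence $b=a+1$. A direct calculation yields
$$ r-l=\frac{\lfloor\beta\rfloor+\beta\lfloor\beta\rfloor}{\beta^2-1}=\frac{\lfloor\beta\rfloor(1+\beta)}{(\beta-1)(\beta+1)}=\frac{\lfloor\beta\rfloor}{\beta-1}. $$
I would then argue that $b-a\le r-l$ is compatible only with $b-a=1$, splitting into two cases according to $\lfloor\beta\rfloor$. If $\lfloor\beta\rfloor=1$, then $\A=\{0,1\}$, so the only pair with $a<b$ is $a=0$, $b=1$ and the claim $b=a+1$ is immediate. If $\lfloor\beta\rfloor=n\ge2$, I use that $\beta>n$ (strictly, as $\beta\notin\Z$), hence $\beta-1>n-1\ge n/2$, to conclude $r-l=n/(\beta-1)<2$; together with $b-a\le r-l$ and $b-a\ge1$ this forces $b-a=1$.

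It remains only to substitute $b=a+1$ into the range obtained in the first step. The intersected interval $l+b\le-\beta x\le r+a$ becomes $l+a+1\le-\beta x\le r+a$, i.e.\ $-\beta x\in[l+a+1,r+a]$, which is exactly the assertion.

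The step I expect to be the genuine obstacle is the case distinction forcing $b-a=1$. One cannot argue uniformly from $r-l<2$, because that inequality fails precisely in the regime $1<\beta<\tfrac{3}{2}$, where $r-l=1/(\beta-1)>2$. The resolution is to notice that in exactly this regime the alphabet $\A=\{0,1\}$ is too small to contain a pair with $b-a\ge2$, so the desired conclusion holds trivially there and the metric estimate is only needed once $\lfloor\beta\rfloor\ge2$.
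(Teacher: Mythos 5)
Your proposal is correct and follows essentially the same route as the paper: both arguments intersect the two membership conditions to get $l+b\leq-\beta x\leq r+a$, deduce $b-a\leq r-l$, handle $\lfloor\beta\rfloor=1$ by the triviality of the alphabet and $\lfloor\beta\rfloor\geq 2$ via $r-l=\lfloor\beta\rfloor/(\beta-1)<2$, and then substitute $b=a+1$. Your closing remark correctly identifies why the case split is unavoidable, which the paper leaves implicit.
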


\begin{proof}
By assumption, we have
\begin{equation}\label{eq:3}
l\leq -\beta x -b < -\beta x - a \leq r\,.
\end{equation}
Therefore $r-l\geq b-a \geq 1$. If $\lfloor\beta\rfloor = 1$, then $\A=\{0,1\}$ and clearly $a=0$, $b=1$. If $\lfloor\beta\rfloor \geq 2$, then $r-l=\frac{\lfloor\beta\rfloor}{\beta-1}<2$, which implies $b=a+1$. Substituting $b=a+1$ into~\eqref{eq:3} we obtain the statement.
\end{proof}

Lemma~\ref{claim1} shows that when deciding about the assignment of the digit $D(x)$, condition (i) allows at most two possibilities, namely $a$ and $a+1$ for some $a\in\A$. Moreover, the choice is unique, unless $-\beta x\in [l+a+1,r+a]$. If this happens, by condition (ii) priority is given to the digit $a$, if $\big|-\beta x -a\big| < \big|-\beta x - (a+1)\big|$, which can be equivalently written
\begin{equation}\label{eq:4}
-\beta x < a+\frac12\,.
\end{equation}
Assignment of the digit $D(x)$ thus depends on the fact whether $a+\frac12$ belongs to the interval $[l+a+1,r+a]$ or not. Since $l+a+1<a+\frac12$ for any $\beta>1$ and $a\in\A$, we have
$$
a+\frac12\in [l+a+1,r+a] \quad\iff\quad a+\frac12 \leq r+a \quad\iff\quad r\geq \frac12\,.
$$
One can easily compute that
\begin{equation}\label{eq:5}
\frac12\leq r = \frac{\lfloor\beta\rfloor}{\beta^2-1} \quad\iff\quad \beta\in (1,\sqrt3]\cup (2,\sqrt5]\,.
\end{equation}

In case that $r<\frac12$, the condition~\eqref{eq:4} is valid for every $x$ such that $-\beta x \in [l+a+1,r+a]$, and thus if (i) allows two digits $a$ and $a+1$, by (ii) we chose for $D(x)$ the smaller one.

If $r\geq \frac12$ and $x$ is such that (i) allows two digits $a$ and $a+1$, then $D(x)=a$ if $-\beta x \in (l+a+1,a+\frac12)$ and  $D(x)=a+1$ if $-\beta x \in (a+\frac12,r+a)$. If $-\beta x=a+\frac12$, we chose $D(x)=a$, because of our convention that $T_o$ is right continuous.

\begin{proposition}\label{p}
Let $\beta>1$, $\beta\notin\N$, $\beta\neq (1,\sqrt3]\cup(2,\sqrt5]$, and $\A=\{0,1,\dots,\lfloor\beta\rfloor\}$. The optimal transformation
$T_o$ is of the form
$$
T_o=
\begin{cases}
-\beta x  & \text{for } x\in -\frac1\beta J = [-\frac{r}{\beta},r]\,,\\
-\beta x -a & \text{for } x\in -\frac{a}{\beta} + [-\frac{r}{\beta},-\frac{r}{\beta}+\frac1\beta)\,.
\end{cases}
$$
Let $\beta\in (1,\sqrt3]\cup(2,\sqrt5]$, and $\A=\{0,1,\dots,\lfloor\beta\rfloor\}$. The optimal transformation
$T_o$ is of the form
\begin{equation}\label{eq:2}
T_o=
\begin{cases}
-\beta x  & \text{for } x\in [-\frac{1}{2\beta},r]\,,\\
-\beta x -\lfloor\beta\rfloor & \text{for } x\in [l,-\frac{\lfloor\beta\rfloor-1}{\beta}-\frac1{2\beta})\,,\\
-\beta x -1 & \text{otherwise.}
\end{cases}
\end{equation}
\end{proposition}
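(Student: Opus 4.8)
The plan is to compute the digit-assigning function $D$ pointwise from conditions (i)--(ii), exactly as they were unravelled in the discussion preceding the statement, and then to convert the resulting rule ``which digit is attached to which $x$'' into explicit $x$-intervals, simplifying every endpoint through the three identities $-\beta r=l$, $\ -\beta l=r+\lfloor\beta\rfloor$ and $-l/\beta=r$, all of which follow immediately from the closed forms of $l$ and $r$.

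First I would fix the geometry. As $x$ runs through $J=[l,r]$ the quantity $-\beta x$ runs through $[l,\,r+\lfloor\beta\rfloor]$, and a digit $a\in\A$ satisfies condition (i) precisely when $-\beta x\in[l+a,\,r+a]$. Each such admissibility window has length $r-l=\frac{\lfloor\beta\rfloor}{\beta-1}<2$, so by Lemma~\ref{claim1} two windows overlap only for consecutive digits, and for every $x$ at most two consecutive digits $a,a+1$ are admissible. This reduces the whole problem to reading off, for each value of $-\beta x$, the admissible window(s) and then applying the selection rule already established.

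For $r<\tfrac12$ (equivalently $\beta\notin(1,\sqrt3]\cup(2,\sqrt5]$ by~\eqref{eq:5}) the rule is to take the smaller admissible digit, so $D(x)$ is simply the least $a\in\A$ with $-\beta x-a\le r$. Solving $D(x)=a$: for $a=0$ this is $-\beta x\in[l,r]$, i.e. $x\in[-r/\beta,r]$ after using $-l/\beta=r$; for $a\ge1$, admissibility of $a$ together with inadmissibility of $a-1$ forces $-\beta x\in(a+r-1,\,a+r]$, i.e. $x\in[-\tfrac{a+r}{\beta},-\tfrac{a+r-1}{\beta})=-\tfrac a\beta+[-\tfrac r\beta,-\tfrac r\beta+\tfrac1\beta)$, which are exactly the stated intervals. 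A short check that the leftmost interval ($a=\lfloor\beta\rfloor$) has left endpoint $-\tfrac{\lfloor\beta\rfloor+r}{\beta}=l$ confirms these pieces tile $J$ without gap or overlap, finishing this case.

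For $r\ge\tfrac12$ the base satisfies $\beta<\sqrt5<3$, hence $\lfloor\beta\rfloor\in\{1,2\}$ and $\A\subseteq\{0,1,2\}$, which is why only the digits $0,1,\lfloor\beta\rfloor$ can occur. On a two-digit window $-\beta x\in[l+a+1,\,r+a]$, condition (ii) with the right-continuity convention assigns $a$ for $-\beta x\le a+\tfrac12$ and $a+1$ for $-\beta x>a+\tfrac12$; the inequality $r\ge\tfrac12$ is precisely what guarantees that the splitting value $a+\tfrac12$ really lies inside the window. Merging each split with the adjacent single-digit windows, I expect $D(x)=0$ on $-\beta x\in[l,\tfrac12]$, $\ D(x)=\lfloor\beta\rfloor$ on $-\beta x\in(\lfloor\beta\rfloor-\tfrac12,\,r+\lfloor\beta\rfloor]$, and $D(x)=1$ in between; passing to $x$ via the identities above yields the intervals $[-\tfrac1{2\beta},r]$, $\ [l,-\tfrac{\lfloor\beta\rfloor-1}{\beta}-\tfrac1{2\beta})$ and the complementary ``otherwise'' piece, the last two coinciding when $\lfloor\beta\rfloor=1$ and all three being genuinely present when $\lfloor\beta\rfloor=2$. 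The step I expect to be most delicate is exactly this bookkeeping for $r\ge\tfrac12$: verifying that each midpoint $a+\tfrac12$ lands in the correct overlap window, handling the right-continuity convention at every splitting point, and confirming that the merged pieces cover $[l,r]$ exactly once; everything else is routine substitution of the closed-form values of $l$ and $r$.
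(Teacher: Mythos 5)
Your proposal is correct and follows essentially the same route as the paper, which states Proposition~\ref{p} without a separate proof precisely because it is the interval bookkeeping you carry out: Lemma~\ref{claim1} limits condition (i) to at most two consecutive digits, condition (ii) with the dichotomy $r<\frac12$ versus $r\geq\frac12$ from~\eqref{eq:5} selects between them, and the identities $-\beta r=l$, $-\beta l=r+\lfloor\beta\rfloor$ translate the windows in $-\beta x$ into the stated intervals in $x$. (Only trivial caveat: $r-l=\frac{\lfloor\beta\rfloor}{\beta-1}<2$ fails for $\lfloor\beta\rfloor=1$ with $\beta$ near $1$, but the conclusion you actually use there is supplied by Lemma~\ref{claim1}, whose proof treats that case separately.)
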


Note that in~\eqref{eq:2}, the prescription splits into three cases if $\beta\in(2,\sqrt5]$, i.e., $\lfloor\beta\rfloor=2$, and only two cases
if $\beta\in(1,\sqrt3]$, i.e., $\lfloor\beta\rfloor=1$.

\begin{figure}
\begin{center}
\subfigure[Base $-\frac12(1+\sqrt5)$.]{
\setlength{\unitlength}{0.75pt}
\begin{picture}(175,170)
\put(5,5){\line(1,0){160}}
\put(5,165){\line(1,0){160}}
\put(5,5){\line(0,1){160}}
\put(165,5){\line(0,1){160}}
\put(105,5){\line(0,1){160}}
\put(5,105){\line(1,0){160}}
\put(5,165){\line(2,-3){69}}
\put(165,5){\line(-3,5){90}}
%
%
\put(75,59){\circle{5}}
\put(5,165){\circle*{5}}
\put(75,154){\circle*{5}}
\put(165,5){\circle*{5}}
\put(75,102){\line(0,1){6}}
\put(102,154){\line(1,0){6}}
\put(102,57){\line(1,0){6}}
\put(-3,105){\makebox(0,0){$-1$}}
\put(172,105){\makebox(0,0){$\frac1\tau$}}
\put(114,154){\makebox(0,0){$\frac12$}}
\put(118,57){\makebox(0,0){$-\frac12$}}
\put(78,95){\makebox(0,0){$-\frac1{2\tau}$}}
\put(40,95){\makebox(0,0){$-\frac1{\tau}$}}
\end{picture}
\label{fig:subfig1}
}
\qquad\qquad
\subfigure[Base $-\frac12(3+\sqrt5)$.]{
\setlength{\unitlength}{0.74pt}
\begin{picture}(175,170)
\put(5,5){\line(1,0){161}}
\put(5,166){\line(1,0){161}}
\put(5,5){\line(0,1){161}}
\put(166,5){\line(0,1){161}}
\put(120,5){\line(0,1){161}}
\put(5,120){\line(1,0){161}}
\put(5,166){\line(2,-5){48}}
\put(53.25,166){\line(2,-5){48}}
\put(166,5){\line(-2,5){64}}
%
%
\put(53.5,44){\circle{5}}
\put(101.5,44){\circle{5}}
\put(5,166){\circle*{5}}
\put(53.25,166){\circle*{5}}
\put(101.5,166){\circle*{5}}
\put(166,5){\circle*{5}}
%
\put(117,44){\line(1,0){6}}
%
\put(-1,120){\makebox(0,0){$l$}}
\put(173,120){\makebox(0,0){$r$}}
\put(137,38){\makebox(0,0){$r-1$}}
\end{picture}
\label{fig:subfig2}
}
\end{center}
\caption {Optimal transformation (a) for the base $-\beta$, where $\beta=\tau$, the golden mean, and (b)
 for the base $-\beta$, where $\beta=\tau^2$.}
\label{f1}
\end{figure}
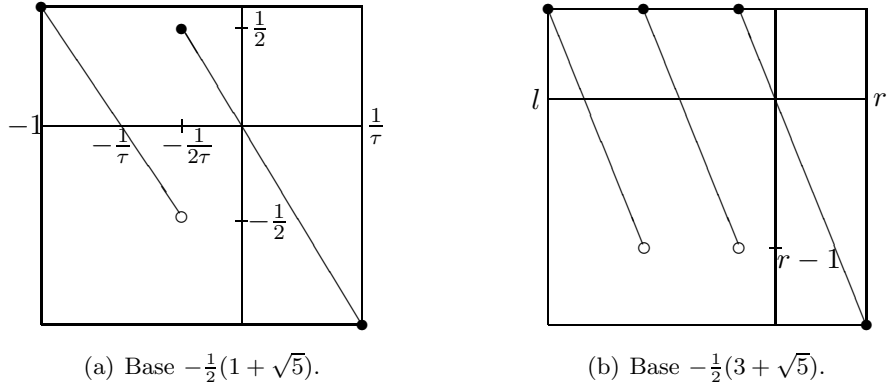

\begin{example}
Let $\beta=\tau=\frac12(1+\sqrt5)$. Then $r=\frac1{\tau^2-1}=\frac1\tau$, $l=-1$. Since $\beta$ belongs to $(1,\sqrt3]$, the optimal transformation in this case has the prescription
$$
T_o(x) = \begin{cases}
-\tau x, &\text{ for }x\in[-\frac1{2\tau}, \frac1\tau]\,,\\
-\tau x -1, &\text{ for }x\in[-1,-\frac1{2\tau})\,.
\end{cases}
$$
The transformation is depicted in Figure~\ref{f1} (a).
\end{example}

\begin{example}
Let $\beta=\tau^2=\frac12(3+\sqrt5)$. Then $r=\frac2{\tau^4-1}$, $l=-\frac{2\tau^2}{\tau^4-1}$. Since $\beta$ does not belong to $(1,\sqrt3]\cup(2,\sqrt5]$, the optimal transformation in this case has the prescription
$$
T_o(x) = \begin{cases}
-\tau^2 x, &\text{ for }x\in[-\frac{r}{\tau^2}, r]\,,\\
-\tau^2 x -1, &\text{ for }x\in[-\frac{r}{\tau^2}-\frac1{\tau^2},-\frac{r}{\tau^2})\,,\\
-\tau^2 x -2, &\text{ for }x\in[l,-\frac{r}{\tau^2}-\frac1{\tau^2})\,,\\
\end{cases}
$$
The transformation is depicted in Figure~\ref{f1} (b).
\end{example}

Note that for the case $\beta>1$, $\beta\notin\N$, $\beta\neq (1,\sqrt3]\cup(2,\sqrt5]$, we have for every discontinuity point $\delta$ of $T_o$ that
$$
\lim_{x\to\delta+} T_o(x) = r \quad\text{and}\quad \lim_{x\to\delta-} T_o(x) = r -1\,.
$$
By the result of Li and Yorke~\cite{LiYorke} for the transformation $T_o$ there exists a unique absolutely continuous invariant measure (acim) for $T_o$, hence $T_o$ is ergodic.  It is easy to see that the support of the acim is the full interval $[l,r]$.

For the case $\beta\in (1,\sqrt3]\cup(2,\sqrt5]$, the transformation $T_o$ has either one (if $\lfloor\beta\rfloor=1$) or two (if $\lfloor\beta\rfloor=2$) discontinuity points. For every discontinuity point $\delta$, we have
$$
\lim_{x\to\delta+} T_o(x) = \frac12 \quad\text{and}\quad \lim_{x\to\delta-} T_o(x) = -\frac12\,.
$$
Again, by Li and Yorke, there exists a unique absolutely continuous $T_o$-invariant ergodic measure. Its support however may not be the full interval, but contains all discontinuity points of $T_o$ in its interior.

\section*{Optimal representations for negative bases}

In~\cite{DDKL} the authors study systems with negative integer bases $-\beta$, $\beta\in\N$, $\beta\geq 2$, and the alphabet $\A=\{0,1,\dots,\beta-1\}$. In such a system, every $x\in J_{-\beta,\A}$ has at most two representations. It is shown that if $x$ has
two representations, than none of them is optimal. However, there are only countably many elements with more than one representation. 
This means that almost every $x\in J_{-\beta,\A}$ has a unique, and thus also optimal representation. The aim of this section is to show that for negative non-integer bases, the situation is different.

\begin{theorem}\label{t:hlavni}
Let $\beta>1$, $\beta\notin\N$, and let $\A=\{0,1,\dots,\lfloor\beta\rfloor\}$. Then almost every $x\in J_{-\beta,\A}$ has no optimal representation.
\end{theorem}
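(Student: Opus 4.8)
The plan is to show that for almost every $x$ the representation produced by the optimal transformation $T_o$ fails to be optimal; since Theorem~\ref{t:1} guarantees that outside the countable set $\bigcup_{k} T_o^k(E)$ this is the \emph{only} candidate for an optimal representation, this yields that almost every $x$ has no optimal representation at all. Writing $x_n=T_o^n(x)\in[l,r]$ for the tail values of this candidate, the identity $x-\sum_{i=1}^n \frac{c_i}{(-\beta)^i}=\frac{x_n}{(-\beta)^n}$ shows that the error of the candidate at level $n$ is exactly $|x_n|/\beta^n$. Thus it suffices to exhibit, for almost every $x$, some level at which a competing $(-\beta,\A)$-representation of $x$ has strictly smaller error.

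The competitors will come from the branching described by Lemma~\ref{claim1}. Call $x_{n-1}$ a \emph{branch point} if $-\beta x_{n-1}$ lies in an overlap interval $[l+a+1,r+a]$, so that both digits $a$ and $a+1$ are admissible; then besides the candidate tail $x_n=T_o(x_{n-1})$ there is an alternative admissible tail $y_n=x_n\mp1\in[l,r]$. Fixing the digits of the candidate up to level $n-1$, choosing the alternative digit at level $n$, and thereafter following the $T_o$-orbit of $y_n$ produces a genuine $(-\beta,\A)$-representation of $x$ whose error at level $n+1$ equals $|T_o(y_n)|/\beta^{n+1}$, to be compared with the candidate's error $|x_{n+1}|/\beta^{n+1}=|T_o(x_n)|/\beta^{n+1}$. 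Condition (ii) in the definition of $T_o$ only guarantees $|x_n|\le|y_n|$ at the branch itself and says nothing about the next step, so I define the \emph{bad set}
\[
G=\bigl\{\,w\in J:\ w\ \text{is a branch point and}\ |T_o(y(w))|<|T_o(T_o(w))|\,\bigr\},
\]
where $y(w)$ denotes the alternative tail at $w$. If the orbit of $x$ ever enters $G$, say $x_{n-1}\in G$, then the competitor just described beats the candidate at level $n+1$, so the candidate is not optimal.

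It then remains to prove two facts. First, $G$ has positive measure. This is a finite piecewise-linear computation: on each branch interval the map $T_o\circ T_o$ is affine, and one checks directly that $|T_o(y(w))|<|T_o(T_o(w))|$ holds on a subinterval of positive length. For instance, in the golden mean case $\beta=\tau$ one branch gives $x_n\in[0,\tfrac12)$ with $y_n=x_n-1$, whence $T_o(x_n)=-\tau x_n$ and $T_o(y_n)=\tfrac1\tau-\tau x_n$, and the defining inequality reduces to $|\,\tfrac1\tau-\tau x_n|<\tau x_n$, which holds precisely for $x_n\in(\tfrac1{2\tau^2},\tfrac12)$; the remaining branches and ranges of $\beta$ are handled in the same way. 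Second, this local event is seen by almost every orbit: by the cited result of Li and Yorke, $T_o$ admits a unique ergodic absolutely continuous invariant measure $\mu$, and Lebesgue-almost every $x$ equidistributes with respect to $\mu$; hence, once $\mu(G)>0$, the orbit of Lebesgue-almost every $x$ enters $G$. Together with the reduction of the first paragraph this gives the theorem.

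The main obstacle is the positive-measure step: one must verify $|T_o(y(w))|<|T_o(T_o(w))|$ on a set of positive length across the case distinctions of Proposition~\ref{p}, namely the generic case and the special ranges $\beta\in(1,\sqrt3]\cup(2,\sqrt5]$, and in those special ranges, where the support of $\mu$ need not be all of $[l,r]$, one must additionally confirm that $G$ meets that support in positive $\mu$-measure. Everything else is routine bookkeeping on the affine branches of $T_o$ combined with the standard ergodic theory of piecewise expanding maps.
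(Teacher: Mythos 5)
Your plan is, in all essentials, the paper's own proof: reduce via Theorem~\ref{t:1} to showing that the $T_o$-generated candidate fails on a set that almost every orbit visits, manufacture the competitor from the two-digit ambiguity of Lemma~\ref{claim1}, compare the errors one step beyond the branch, and finish with the ergodicity of $T_o$ coming from Li--Yorke. Your golden-mean computation even reproduces the paper's interval exactly: the branch points $w$ with $-\tau w\in(\frac1{2\tau^2},\frac12)$ form the interval $(-\frac1{2\tau},-\frac1{2\tau^3})$, which is the paper's $I=(-\frac1{2\beta},-\frac{\{\beta\}}{2\beta^2})$ for $\beta=\tau$.

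The caveat is that the two items you defer are precisely where the paper's proof does its work, and neither is uniform in $\beta$. For the positive-measure step the paper exhibits an explicit interval $I$ on which $T_o$ emits the prefix $00$ while $1\lfloor\beta\rfloor$ is a competing prefix, and the endpoints of $I$ must change across three cases ($r\ge\frac12$, $\frac12>r>\frac{\{\beta\}}{2}$, $r\le\frac{\{\beta\}}{2}$): one needs simultaneously that $T_o(I)$ stays in the digit-$0$ region, that the competitor's tail lands in $J$ (condition~\eqref{eq:8}), and that $x$ lies beyond the midpoint $-\frac{\{\beta\}}{2\beta^2}$ of $0$ and $-\frac{\{\beta\}}{\beta^2}$ (condition~\eqref{eq:7}); which of the last two constraints is binding depends on the size of $r$ relative to $\frac{\{\beta\}}{2}$. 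Your assertion that ``the remaining branches and ranges of $\beta$ are handled in the same way'' is true but is exactly this three-case verification, not a formality. For the support issue when $\beta\in(1,\sqrt3]\cup(2,\sqrt5]$, which you correctly flag but leave open, the paper's resolution is to choose $I$ with the discontinuity point $-\frac1{2\beta}$ as an endpoint and to observe that the support of the acim contains every discontinuity point of $T_o$ in its interior, so $I$ meets the support in a nondegenerate interval. With these two verifications supplied your argument closes (indeed, since $T_o$ minimizes the modulus over admissible digits, your bad set $G$ contains the paper's $I$); as written, however, they are asserted rather than proved.
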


\begin{proof}
Consider the optimal transformation $T_o$. Recall that the set of its points of discontinuity was denoted by $E$. If $x$ does not belong to the countable union $S:=\bigcup_{k=0}^\infty T_o^k(E)$, then only the representation generated by $T_o$ can be optimal (see Theorem~\ref{t:1}). We will show that there is an interval $I\subset J_{-\beta,\A}$ such that for any $y\in I$, its representation generated by $T_o$ is not optimal. 
In determining the desired interval I, we distinguish three cases:
\begin{description}
\item{\bf Case 1: $\beta\in (1,\sqrt3]\cup(2,\sqrt5]$.} By~\eqref{eq:5}, this is equivalent to $r\geq \frac12$.  Put $I=(-\frac1{2\beta},-\frac{\{\beta\}}{2\beta^2})$, where we denote $\{\beta\}=\beta-\lfloor\beta\rfloor$.
    Using the explicit form of the transformation $T_o$ from Proposition~\ref{p}, we can verify that $I\subset(-\frac1{2\beta},r)$ and $T_o(I)\subset(-\frac1{2\beta},r)$. Therefore the representation of $x\in I$ generated by $T_o$ is of the form
    \begin{equation}\label{eq:6}
    x=\frac{0}{-\beta} + \frac{0}{(-\beta)^2} + \sum_{i=3}^\infty \frac{c_i}{(-\beta)^i}\,.
    \end{equation}
    On the other hand, the choice of the right end-point of the interval $I$ ensures that
    \begin{equation}\label{eq:7}
    |x|>\Big|x+\frac{\{\beta\}}{\beta^2}\Big| = \Big|x-\big(\frac{1}{-\beta} + \frac{\lfloor\beta\rfloor}{(-\beta)^2}\big)\Big|\,.
    \end{equation}
    It can be easily checked that
    \begin{equation}\label{eq:8}
    x-\Big(\frac{1}{-\beta} + \frac{\lfloor\beta\rfloor}{(-\beta)^2}\Big) \in \frac1{\beta^2}[l,r] = \frac1{\beta^2} J_{-\beta,\A}\,,
    \end{equation}
    i.e., there exist digits $d_3,d_4,d_5,\dots\in\A$ so that
    $x=\frac{1}{-\beta} + \frac{\lfloor\beta\rfloor}{(-\beta)^2} + \sum_{i=3}^\infty \frac{d_i}{(-\beta)^i}$. Inequality~\eqref{eq:7}
    excludes that the representation~\eqref{eq:6} of $x$ is optimal, since it contradicts~\eqref{eq:1}.
\item{\bf Case 2: $\frac12 > r> \frac{\{\beta\}}{2}$.} Put $I=(-\frac{r}{\beta},-\frac{\{\beta\}}{2\beta^2})$.
\item{\bf Case 3: $r\leq \frac{\{\beta\}}{2}$.} Put $I=(-\frac{r}{\beta},\frac{r}{\beta^2}-\frac{\{\beta\}}{\beta^2})$.
\end{description}

The argumentation for cases 2 and 3 is similar to that of Case 1. For every $x\in I$, the representation of $x$ generated by $T_o$ is of the form~\eqref{eq:6}, and $x$ satisfies~\eqref{eq:7} and~\eqref{eq:8}. Thus $x$ has no optimal representation.

We have shown that no $x\in I$ has an optimal representation.
For the proof of Theorem~\ref{t:hlavni} it suffices to show that for almost every $x\in J_{-\beta,A}$ there is a $k\in\N$ such that $T_o^k(x)\in I$. This is a consequence of the Birkhoff ergodic theorem.
In fact, in Cases 2 and 3, the interval $I$ is a subset of the support of the acim. In Case 1, this may not be the case. Nevertheless, since the interval $I=(-\frac1{2\beta},-\frac{\{\beta\}}{2\beta^2})$ has a discontinuity point $\delta=-\frac1{2\beta}$ as its end-point, the intersection of $I$ and the support of the acim is a non-degenerate interval and the statement is also established.
\end{proof}

\section*{Optimal representations for positive bases}

For a positive integer base $\gamma = \beta\in\N$ and the alphabet of digits $\A=\{0,1,\dots,\beta-1\}$, numbers in $[0,1)$ have either a unique one, or two representations of the form~\eqref{eq:rep}, and one easily shows that the greedy representation of every $x\in[0,1)$ is the optimal representation. Let us therefore focus on non-integer bases $\beta>1$ with the alphabet $\A=\{0,1,\dots,\lfloor\beta\rfloor\}$
The question of optimal representations in such systems has been completely solved in~\cite{DDKL}. Also the idea behind our proof of Theorem~\ref{t:hlavni} in the previous section is an analogue of that of~\cite{DDKL}, namely, finding an interval $I$ of positive length in which the greedy expansion of any $x$ is not an optimal representation. The existence of such an interval $I$ separates between confluent and non-confluent bases. This, together with further technical details, is the content of Propositions 2.1 and 3.1. in~\cite{DDKL}. These propositions, together with the fact that the R\'enyi greedy transformation $T_G$ on $[0,1)$ is ergodic provides a straightforward proof of Theorem 1.3 from ~\cite{DDKL}.

In what follows, we provide an alternative simpler proof of the exceptional stand of confluent bases. We will need the Parry characterization of greedy expansions, see~\cite{Parry}.

Let us recall some facts about the greedy representation $x=\sum_{i=1}^\infty\frac{x_i}{\beta^i}$ of a real number $x\in[0,1)$ in base $\beta$ written as $d(x) = x_1x_2x_3\cdots$. It can be shown that the natural ordering $<$ of reals in $[0,1)$ corresponds to the lexicographic ordering $\prec$ of their greedy representations. Formally, for every $x,y\in[0,1)$ we have $x<y$ if and only if $d(x)\prec d(y)$. For the description of digit strings  arising as greedy representations of numbers in $[0,1)$, crucial role is played by the infinite R\'enyi expansion of 1, namely the string $d^*(1) = \lim_{x\to 1-}d(x)$.
Parry~\cite{Parry} has shown that a digit string $y_1y_2y_3\cdots \in \A^\N$ is  equal to $d(y)$ for some $y\in[0,1)$ if and only if
\begin{equation}\label{eq:10}
y_iy_{i+1}y_{i+2}\cdots \prec d^*(1) \qquad \text{for every }\ i=1,2,3,\dots \,.
\end{equation}
Digits strings in $\A^\N$ with the property~\eqref{eq:10} are called admissible. It is also shown that every suffix of $d^*(1)$ is lexicographically smaller or equal to $d^*(1)$.

Taking for the base $\beta$ a confluent Pisot number, zero of the polynomial~\eqref{eq:confluent}, the infinite R\'enyi expansion of 1 is of the form $d^*(1)=(mm\cdots m p)^\omega$. Since $\beta\in(m,m+1)$, the canonical alphabet $\A$ is composed of digits $0,1,2,\dots,m=\lfloor\beta\rfloor$.
The non-admissible digit strings $y_1y_2y_3\cdots \in \A^\N$ with a finite number of non-zero digits must contain a substring $y_iy_{i+1}\cdots y_{i+n} = m^na$ with $a\in\A$, $a>p$. In~\cite{FrougnyConfluent}, Frougny studies confluent numeration systems. From her results, one can easily derive that every $y_1y_2\cdots y_k0^\omega \in \A^\N$ such that $y=\sum_{j=1}^k\frac{y_j}{\beta^j}\in[0,1)$ can be reduced to an admissible string $x_1x_2\cdots x_k0^\omega$ representing the same number $y=\sum_{j=1}^k\frac{x_j}{\beta^j}$,
using the rewriting rule $bm^na \to (b+1)0^n(a-p-1)$, $b\in\{0,1,\dots,m-1\}$, $a\in\{p+1,\dots,m\}$. In other words, we have the following proposition.

\begin{proposition}\label{p:frougny}
Let $\beta>1$ be a confluent Pisot number, i.e., let $d^*(1)=(mm\cdots mp)^\omega$ where $m,p\in\N$, $0\leq p< m$. If $\sum_{j=1}^\infty\frac{y_j}{\beta^j}$ is a representation of a number $y$ in the canonical alphabet $\A=\{0,1,\dots,m\}$ such that $y_j=0$ for all $j> k$, then the greedy representation of $y$ is of the form $y=\sum_{j=1}^\infty\frac{x_j}{\beta^j}$, where also $x_j=0$ for all $j> k$.
\end{proposition}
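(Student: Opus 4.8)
The plan is to transform the given finite string into the greedy representation by repeatedly deleting forbidden factors, while tracking the support. Write the period of $d^*(1)=(mm\cdots mp)^\omega$ as $m^dp$, so that $d+1$ is the degree of~\eqref{eq:confluent}; then $\beta^{d+1}=m\beta^d+\cdots+m\beta+(p+1)$, equivalently $1=\frac{m}{\beta}+\cdots+\frac{m}{\beta^d}+\frac{p+1}{\beta^{d+1}}$. This identity is exactly what makes the rewriting rule $bm^da\to(b+1)0^d(a-p-1)$, with $b\in\{0,\dots,m-1\}$ and $a\in\{p+1,\dots,m\}$, value preserving: applied to a window whose digit $b$ sits at position $j$, it replaces the contribution $\frac{b}{\beta^j}+\sum_{i=1}^d\frac{m}{\beta^{j+i}}+\frac{a}{\beta^{j+d+1}}$ by $\frac{b+1}{\beta^j}+\frac{a-p-1}{\beta^{j+d+1}}$ without changing the sum, and it keeps the digits in $\A$. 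Starting from $\sigma^{(0)}=y_1\cdots y_k0^\omega$ I would iterate this rule; by the observation recalled just before the proposition, a finite non-admissible string must contain a factor $m^da$ with $a>p$, so the iteration can only stop at an admissible string, which by Parry's criterion~\eqref{eq:10} is the greedy representation $d(y)$.

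First I would verify that the rule is always applicable to a non-admissible current string. Let $i$ be the \emph{leftmost} position where a forbidden factor $y_i\cdots y_{i+d}=m^da$, $a>p$, begins. If $i=1$ then $y\ge\frac{m}{\beta}+\cdots+\frac{m}{\beta^d}+\frac{p+1}{\beta^{d+1}}=1$, contradicting $y\in[0,1)$; hence $i\ge 2$ and the factor is preceded by a digit $b:=y_{i-1}$. This $b$ satisfies $b<m$, for otherwise $y_{i-1}=\cdots=y_{i+d-1}=m$ together with $y_{i+d-1}=m>p$ would exhibit a forbidden factor already at position $i-1$, against minimality of $i$. Thus $b\in\{0,\dots,m-1\}$, the rule applies, and it produces the digits $b+1\in\{1,\dots,m\}$ and $a-p-1\in\{0,\dots,m-1\}$, both in $\A$.

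Next I would control the support and, simultaneously, prove termination. The rule only alters positions $i-1,\dots,i+d$; since $a\neq 0$, the inductive hypothesis that every nonzero digit lies in $\{1,\dots,k\}$ forces $i+d\le k$, so the rewritten string again has all nonzero digits in $\{1,\dots,k\}$, and the property ``$x_j=0$ for $j>k$'' is maintained throughout. The main obstacle is to guarantee that the process halts, and here I would use the digit sum $\Sigma=\sum_j y_j$, a nonnegative integer: comparing $(b+1)+0\cdot d+(a-p-1)=b+a-p$ with the original $b+md+a$ shows that each rewriting decreases $\Sigma$ by exactly $md+p\ge 1$ (there is at least one $m$ in the period, so $d\ge 1$). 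Consequently only finitely many steps are possible; the procedure terminates at an admissible string with support in $\{1,\dots,k\}$ representing the same number $y$, and by uniqueness of the greedy expansion this string is $d(y)$. This establishes the proposition.
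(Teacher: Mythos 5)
Your proof is correct and follows exactly the route the paper takes: the paper derives this proposition from Frougny's rewriting rule $bm^na\to(b+1)0^n(a-p-1)$ without supplying the details, and you have made that derivation self-contained (value preservation via $1=\sum_{i=1}^{d}m\beta^{-i}+(p+1)\beta^{-d-1}$, applicability via the leftmost forbidden factor together with $y<1$, termination via the strictly decreasing digit sum, and support control via $a\neq 0$). There are no gaps; this is the same approach with the verifications the paper leaves to the reader carried out in full.
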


\begin{proposition}\label{p:A}
Let $\beta>1$ satisfy $d^*(1)=(mm\cdots mp)^\omega$ where $m,p\in\N$, $0\leq p< m$. Then every $x\in[0,1)$ has an optimal representation.
\end{proposition}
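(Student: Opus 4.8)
The plan is to prove that the greedy expansion---which by property (b) is the only candidate for the optimal representation---actually satisfies~\eqref{eq:1}. Write $d(x)=c_1c_2c_3\cdots$ for the greedy expansion of $x\in[0,1)$ and set $S_n^c=\sum_{i=1}^n c_i/\beta^i$, and likewise $S_n^b=\sum_{i=1}^n b_i/\beta^i$ for any $(\beta,\A)$-representation $\sum b_i/\beta^i$ of $x$. The first observation is that every tail $x-S_n^b=\sum_{i>n}b_i/\beta^i$ is nonnegative, because all digits are nonnegative. Hence the absolute values in~\eqref{eq:1} may be dropped, and optimality of the greedy expansion is equivalent to the domination of partial sums: $S_n^c\ge S_n^b$ for every representation $b$ and every $n$. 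I would therefore assume, for contradiction, that $S_n^b>S_n^c$ for some representation $b$ and some index $n$, and aim to derive an impossible lexicographic inequality.

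The second step records three greedy expansions. Since $0\le S_n^c<S_n^b\le x<1$, all three numbers lie in $[0,1)$, so each has a greedy expansion, and because the natural order on $[0,1)$ corresponds to the lexicographic order $\prec$ of greedy expansions I obtain $d(S_n^c)\prec d(S_n^b)\preceq d(x)$. Here $d(x)=c_1c_2\cdots$. A short admissibility check identifies $d(S_n^c)$ explicitly: every suffix of the string $c_1\cdots c_n0^\omega$ is lexicographically $\preceq$ the corresponding suffix of the admissible string $c_1c_2\cdots$, hence $\prec d^*(1)$ by~\eqref{eq:10}, so $c_1\cdots c_n0^\omega$ is itself admissible and therefore $d(S_n^c)=c_1\cdots c_n0^\omega$.

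The confluence of $\beta$ now enters exactly once, and this is the crux. The number $S_n^b$ has the finite representation $b_1\cdots b_n0^\omega$, so Proposition~\ref{p:frougny} forces its greedy expansion to be finite as well, $d(S_n^b)=g_1\cdots g_n0^\omega$. Comparing the three expansions finishes the argument. From $d(S_n^c)\prec d(S_n^b)$ there is a first position $j\le n$ with $c_1\cdots c_{j-1}=g_1\cdots g_{j-1}$ and $c_j<g_j$. But then in $d(x)=c_1c_2\cdots$ the digits agree with $g_1\cdots g_n0^\omega$ on positions $1,\dots,j-1$ while $g_j>c_j$, so $d(S_n^b)\succ d(x)$, contradicting $d(S_n^b)\preceq d(x)$. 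Thus $S_n^b\le S_n^c$ for all $b$ and all $n$, i.e.\ the greedy expansion is optimal, and every $x\in[0,1)$ has an optimal representation.

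The step I expect to be the genuine obstacle---and the sole place where confluence is used---is the appeal to Proposition~\ref{p:frougny}: for a general base a finite representation need not possess a finite greedy expansion, and it is precisely this finiteness (equivalently, the rewriting rule $bm^na\to(b+1)0^n(a-p-1)$ underlying Frougny's result) that allows the lexicographic comparison to close up. By contrast, the reduction to domination of partial sums and the verification that $c_1\cdots c_n0^\omega$ is the greedy expansion of $S_n^c$ are routine.
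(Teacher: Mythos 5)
Your proof is correct and follows essentially the same route as the paper's: both reduce optimality to the inequality $S_n^c\geq S_n^b$ (the absolute values drop since all tails are nonnegative), both invoke Proposition~\ref{p:frougny} to replace the truncated representation $b_1\cdots b_n0^\omega$ by an admissible (equivalently, finite greedy) string for the same number, and both conclude via Parry's correspondence between the order of reals and the lexicographic order of greedy expansions. The only difference is cosmetic: you argue by contradiction where the paper argues directly via the case split admissible/non-admissible.
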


\begin{proof}
Let $ \sum_{j=1}^\infty\frac{y_j}{\beta^j}=\sum_{j=1}^\infty\frac{x_j}{\beta^j}$ be two representations of a number $x\in[0,1)$ and let $d(x)=x_1x_2x_3\cdots$. Since the greedy representation $d(x)$ is lexicographically the largest among all representations of the number $x$, we have $x_1x_2x_3\cdots \succeq y_1y_2y_3\cdots$. Necessarily, for every $k\in\N$, we have
\begin{equation}\label{eq:nerovnost}
x_1x_2\cdots x_k 0^\omega \succeq y_1y_2\cdots y_k0^\omega\,.
\end{equation}
We distinguish two cases:
\begin{itemize}
\item[\bf a)] Let $y_1\cdots y_k0^\omega$ be admissible. Then it is the greedy expansion of some number in $[0,1)$. Since the lexicographic order preserves the order of the corresponding real numbers, inequality~\eqref{eq:nerovnost} implies
    $$
    \sum_{j=1}^k\frac{y_j}{\beta^j}\leq \sum_{j=1}^k\frac{x_j}{\beta^j}\,,
    $$
    whence
    $$
    x-\sum_{j=1}^k\frac{x_j}{\beta^j} \leq x-\sum_{j=1}^k\frac{y_j}{\beta^j}\,.
    $$

\item[\bf b)] Let $y_1\cdots y_k0^\omega$ be non-admissible. By Proposition~\ref{p:frougny} there exists an admissible string $y'_1\cdots y'_k0^k$ such that
 $\sum_{j=1}^k\frac{y_j}{\beta^j}=\sum_{j=1}^k\frac{y'_j}{\beta^j}$. Thus $x$ has a representation of the form $x=\sum_{j=1}^k\frac{y'_j}{\beta^j}+\sum_{j=k+1}^\infty\frac{y_j}{\beta^j}$ for which
 $ y'_1\cdots y'_ky_{k+1}y_{k+2}\cdots \preceq x_1x_2x_3\cdots$.
 Using the same argumentation as in case a), we have
 $$
 x-\sum_{j=1}^k\frac{x_j}{\beta^j} \leq x-\sum_{j=1}^k\frac{y'_j}{\beta^j} = x-\sum_{j=1}^k\frac{y_j}{\beta^j}\,.
 $$
\end{itemize}

The discussion shows that~\eqref{eq:1} is satisfied for every $k\geq 1$, and therefore the greedy representation of $x$ is optimal.
\end{proof}

\begin{proposition}
Let $\beta>1$, $\beta\notin\Z$ and let $d^*(1)\neq (m\cdots mp)^\omega$, $m,p\in\N$, $0\leq p<m$. Then there exists an interval $I\subset[0,1]$ such that no $x\in I$ has an optimal representation.
\end{proposition}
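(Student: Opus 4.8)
The plan is to mirror the structure of the proof of Theorem~\ref{t:hlavni} and of Proposition~\ref{p:A}: by property~(b) of the greedy expansion the greedy representation is the only possible optimal representation (here $T_o=T_G$ and $E=\emptyset$), so it suffices to produce an interval $I\subset[0,1]$ on which the greedy expansion of every $x$ fails to be optimal. Since the base is positive and the digits are nonnegative, every partial sum of any $(\beta,\A)$-representation of $x$ satisfies $\sum_{i=1}^n\frac{b_i}{\beta^i}\le x$. Hence, writing $d(x)=x_1x_2\cdots$, the greedy expansion is non-optimal at $x$ precisely when there exist a level $n$ and a word $w_1\cdots w_n\in\A^n$ which is a prefix of some representation of $x$ (equivalently $\mathrm{val}(w):=\sum_{i=1}^n\frac{w_i}{\beta^i}$ satisfies $\mathrm{val}(w)\le x\le \mathrm{val}(w)+\frac{m}{\beta^n(\beta-1)}$) and for which $\mathrm{val}(w)>\sum_{i=1}^n\frac{x_i}{\beta^i}$. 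This is the exact analogue of inequalities~\eqref{eq:7} and~\eqref{eq:8}.

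The heart of the argument is to manufacture such a competing word from the failure of the confluent pattern, i.e.\ from $d^*(1)\ne(m\cdots mp)^\omega$. Here I would use the contrapositive of the mechanism behind Proposition~\ref{p:A}: confluence is exactly what makes the Frougny rewriting $bm^na\to(b+1)0^n(a-p-1)$ of Proposition~\ref{p:frougny} collapse every finite-support representation to an admissible one of the same support, so that no competitor can ever overtake the greedy partial sum. When $d^*(1)$ is not of the form $(m\cdots mp)^\omega$, this collapse must fail. Concretely, let $d$ be the number of leading digits of $d^*(1)$ equal to $m=\lfloor\beta\rfloor$ and let $p$ be the $(d+1)$-st digit (so $p<m$); non-confluence means $d^*(1)\ne(m^dp)^\omega$, and I would locate the first index $N$ at which $d^*(1)$ deviates from the periodic word $(m^dp)^\omega$. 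From this deviation one reads off a non-admissible word $u$ of length $n$ whose value $\mathrm{val}(u)$ exceeds the value of the admissible (hence slower-growing) greedy prefix that any $x$ just above $\mathrm{val}(u)$ is forced to use --- exactly the situation encountered in the computation $1\,2\,2$ versus $2\,0\,0$ for $\beta=\tau^2$.

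Having fixed the witness $u$, I would promote it to an interval by the same continuity/right-neighbourhood argument as in Theorem~\ref{t:hlavni}: set the left endpoint of $I$ at (or just above) $\mathrm{val}(u)$ and choose the length small enough that for every $x\in I$ the greedy expansion still produces the same short prefix $P$ with $\mathrm{val}(P)<\mathrm{val}(u)\le x$, while $u$ remains a valid prefix of a representation of $x$, i.e.\ $\mathrm{val}(u)\in\big(\mathrm{val}(P),x\big]$ and $x-\mathrm{val}(u)\le\frac{m}{\beta^n(\beta-1)}$. Because the gap $\mathrm{val}(u)-\mathrm{val}(P)$ is a fixed positive number determined by $u$ and $\beta$, these constraints cut out a nondegenerate interval, and on it the greedy expansion violates~\eqref{eq:1}. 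Since the greedy representation is the only candidate for an optimal one, no $x\in I$ has an optimal representation.

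The main obstacle is the middle step: extracting a single explicit competitor word $u$ (and the accompanying interval) uniformly from the mere statement $d^*(1)\ne(m\cdots mp)^\omega$, which a priori covers both an aperiodic $d^*(1)$ and a periodic one whose period is not of the confluent shape. This requires a careful analysis of how the Parry admissibility condition~\eqref{eq:10} is first violated by the confluent template, together with the verification --- using that every suffix of $d^*(1)$ is $\preceq d^*(1)$ --- that the resulting $u$ is simultaneously strictly larger in value than the greedy prefix forced at $x$ and genuinely extendable to a representation of $x$ throughout $I$. All remaining estimates are the routine linear and continuity computations already illustrated by the two worked examples.
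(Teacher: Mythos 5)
Your overall strategy coincides with the paper's: reduce to showing that the greedy expansion (the unique candidate for optimality, by property (b)) fails inequality~\eqref{eq:1} on some interval, by exhibiting a competing representation whose partial sum at some level strictly exceeds the greedy partial sum while staying $\le x$. That reduction, including the extendability criterion $\mathrm{val}(w)\le x\le \mathrm{val}(w)+\frac{m}{\beta^n(\beta-1)}$, is sound for the canonical alphabet. However, the step you yourself flag as ``the main obstacle'' --- actually producing the competitor word and the interval from the hypothesis $d^*(1)\neq(m\cdots mp)^\omega$ --- is exactly the mathematical content of the proposition, and it is absent. Your sketch of that step (locate the first index where $d^*(1)$ deviates from the template $(m^dp)^\omega$, then ``read off a non-admissible word $u$'') is not a construction: it is not explained what $u$ is, why $\mathrm{val}(u)$ beats the forced greedy prefix, or why the deviation (which could occur in many ways, e.g.\ a later digit being larger or smaller than the template, or $d^*(1)$ being aperiodic) always yields such a $u$. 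As written, the argument is circular at its core: it asserts that non-confluence produces a witness because confluence is what prevents one.

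The paper closes this gap with a construction that is simpler than the one you envision and uses non-confluence at a different, single point. Writing $d^*(1)=t_1t_2\cdots$ and $i=\min\{j\ge 2: t_j<t_1\}$, the competitor digit block is $t_1\cdots t_{i-1}(t_i+1)$ (legal since $t_i<t_1=m$), with value $L=\sum_{j=1}^{i-1}\frac{t_j}{\beta^j}+\frac{t_i+1}{\beta^i}$. The only place non-confluence enters is in proving the \emph{strict} inequality $t_{i+1}t_{i+2}\cdots\prec d^*(1)$ (equality would force $d^*(1)=(t_1\cdots t_1t_i)^\omega$, i.e.\ confluence), which gives $\sum_{j\ge1}t_{j+i}\beta^{-j}<1$ and hence $L>1$. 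Then on $I=\beta^{-k}(L,R)$ with $R=1+\beta^{-i}$ and $k$ large, the greedy prefix of every $x\in I$ is $0^{k-1}1\,0^{i}$ with partial sum $\beta^{-k}$, while $0^{k}t_1\cdots t_{i-1}(t_i+1)$ extends to a representation of $x$ with partial sum $\beta^{-k}L>\beta^{-k}$, violating~\eqref{eq:1}. No case analysis on how $d^*(1)$ deviates from the confluent template is needed. To repair your proposal you would need to supply an argument of comparable precision; without it the proof is incomplete.
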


\begin{proof} Denote $d^*(1)=t_1t_2t_3\cdots$ and put $i:=\min\{j\geq 2 \mid t_j<t_1\}$, i.e.,
$d^*(1) = t_1\cdots t_1 t_{i} t_{i+1\cdots}$. Realize that necessarily
\begin{equation}\label{eq:nonconfluent}
t_{i+1}t_{i+2}\cdots \prec d^*(1)=t_1t_2\cdots\,,
\end{equation}
since otherwise $t_{i+1}t_{i+2}\cdots = t_1t_2\cdots$, hence $d^*(1)$ is purely periodic with period of length $i$. In particular,
$d^*(1) = (t_1t_1\cdots t_1 t_{i})^\omega$, which in turn means that $\beta$ is a confluent Pisot number. Inequality~\eqref{eq:nonconfluent} implies that 
$$
\sum_{j=1}^\infty\frac{t_{j+i}}{\beta^j}<1\,.
$$
Define a number $R=1+\frac1{\beta^i}$ and 
$$
L=\frac{t_1}{\beta}+\frac{t_2}{\beta}+\cdots+\frac{t_{i-1}}{\beta^{i-1}}+\frac{t_i+1}{\beta^i}\,.
$$  
We have
$$
1<1+\frac1{\beta^i}\Big(\underbrace{1-\sum_{j=1}^\infty\frac{t_{j+i}}{\beta^j}}_{>0}\Big) = \sum_{j=1}^{\infty}\frac{t_j}{\beta^{j}} + \frac1{\beta^{i}} - \sum_{j=1}^\infty\frac{t_{j+i}}{\beta^{j+i}} = L <R\,.
$$
Find an integer $k$ such that $I:=\frac1{\beta^k}(L,R)\subset(0,1)$.
We will show that no $x\in I$ has an optimal representation.

Let $x=\sum_{j=1}^\infty\frac{x_j}{\beta^j}$ be the greedy representation of $x\in I$. Since 
$$
\frac1{\beta^k}<\frac1{\beta^k}L<x<\frac1{\beta^k}R=\frac1{\beta^k}+\frac1{\beta^{k+i}}\,,
$$ 
we have $x_1=x_2=\cdots = x_{k-1}=0$, $x_k=1$, and $x_{k+1}=\cdots=x_{k+i}=0$. Let  $\sum_{j=1}^\infty\frac{y_j}{\beta^j}$ be the greedy representation of the number $y=x-\frac1{\beta^k}L$. 
As $x<\frac1{\beta^{k}}R$ and $L>1$, we have $y<\frac1{\beta^k}(R-1)=\frac1{\beta^{k+i}}$. Then necessarily $y_1=y_2=\cdots=y_{k+i}=0$. Therefore
$$
x=\underbrace{\frac{t_1}{\beta^{k+1}}+\cdots + \frac{t_{i-1}}{\beta^{k+i-1}} + \frac{t_i+1}{\beta^{k+i}}}_{=\frac1{\beta^k}L} + \underbrace{\frac{y_{k+i+1}}{\beta^{k+i+1}}+\frac{y_{k+i+2}}{\beta^{k+i+2}}+\cdots}_{=y}
$$
is also a representation of the number $x$. However,
$$
x-\sum_{j=1}^{k+i} \frac{x_j}{\beta^j} = x-\frac1{\beta^k} > x-\frac1{\beta^k}L\,.
$$
The latter contradicts inequality~\eqref{eq:1}, i.e., the greedy representation of $x$ is not optimal. As explained before, this means that $x$ does not have an optimal representation.

\end{proof}

\section{Comments}

Similarly as in~\cite{DDKL}, we consider numeration systems with digits in the alphabet $\A=\{0,1,\dots,m\}$, where $m$ is a minimal integer such that $J_{\gamma,\A}$ is an interval. It would be interesting to study how the choice of the alphabet influences existence of an optimal representation. In particular, one can ask whether in the system with symmetric alphabet $\B:=(-\frac{\beta+1}{2},\frac{\beta+1}{2})\cap\Z$, considered by Akiyama and Scheicher~\cite{AkiyamaScheicher}, there exist exceptional bases with properties analogous to those of confluent Pisot numbers in the system with the alphabet $\{0,1,\dots,\lceil\beta\rceil-1\}$.

\section*{Acknowledgements}

We acknowledge the financial support by the Czech Science Foundation
grant GA\v CR 201/09/0584, and by the grants MSM6840770039 and LC06002
of the Ministry of Education, Youth, and Sports of the Czech
Republic.


%



\begin{thebibliography}{99}

\bibitem{AkiyamaScheicher} Shigeki Akiyama, Klaus Scheicher. \emph{Symmetric shift radix systems and finite expansions.}, Mathematica Pannonica {\bf 18} (2007), 101--124.

\bibitem{BernatConfluent} Julien Bernat. \emph{Symmetrized $\beta$-integers.}
Theor. Comput. Sci. {\bf 391} (2008), 164--177.

\bibitem{DDKL} Karma Dajani, Martijn de Vries, Vilmos Komornik, Paola Loreti. \emph{Optimal expansions in non-integer bases.}
Proc. Amer. Math. Soc. {\bf 140} (2012),  437--447.

\bibitem{Edson} Marcia Edson. \emph{Calculating the number of representations and the Garsia entropy in linear numeration systems.}
Monatsh. Math., (2012) to appear.


\bibitem{FrougnyConfluent} Christiane Frougny. \emph{Confluent linear numeration systems.} Theor. Comp. Sci. {\bf 106} (1992), 183--219.

\bibitem{Complexity} Christiane Frougny, Zuzana Mas\'akov\'a, Edita Pelantov\'a.
\emph{Complexity of infinite words associated with beta-expansions.} RAIRO Theor. Inf. and Appl. {\bf 38} (2004), 162--184.

\bibitem{LiYorke} Tien-Yien Li, James A.~Yorke. \emph{Ergodic Transformations from an Interval Into Itself.}
Trans. Amer. Math. Soc. {\bf 235} (1978), 183--192.

\bibitem{LiaoSteiner} Lingmin Liao, Wolfgang Steiner. \emph{Dynamical properties of the negative beta transformation.} To appear in Ergod. Theor. Dyn. Syst. (2012).

\bibitem{Parry} William Parry. \emph{On the $\beta$-expansions of real numbers.}
Acta Math. Acad. Sci. Hung. {\bf 11} (1960), 401--416.

\bibitem{Pediccini}
Marco Pedicini. \emph{Greedy expansions and sets with deleted digits.} Theor. Comp.
Sci., {\bf 332} (1-3) (2005), 313--336.

\bibitem{Renyi}  Alfred R\'enyi. \emph{Representations for real numbers and
their ergodic properties.} { Acta Math. Acad. Sci. Hungar.} {\bf
8} (1957), 477--493.

\bibitem{Sidorov} Nikita Sidorov.
\emph{Almost every number has a continuum of beta-expansions.}
Amer. Math. Monthly  {\bf 110} (2003), 838--842.

\end{thebibliography}
\end{document}